\theoremstyle{definition}
\newtheorem{definition}{Definition}
\newtheorem{theorem}[definition]{Theorem}
\newtheorem{proposition}[definition]{Proposition}
\newtheorem{lemma}[definition]{Lemma}
\theoremstyle{remark}
\newtheorem{remark}[definition]{Remark}
\newcounter{enumctr}
\newcommand{\R}{\mathbb{R}}
\newcommand{\C}{\mathbb{C}}
\newcommand{\id}{\hbox{id}}
\newcommand{\eps}{\varepsilon}
\renewcommand{\phi}{\varphi}
\newcommand{\rT}{\mathrm {T}}
\begin{document}
\title{\vspace*{-10mm}
Linearized Asymptotic Stability\\ for Fractional Differential Equations}

\author{
N.D.~Cong\footnote{\tt ndcong@math.ac.vn, \rm Institute of Mathematics, Vietnam Academy of Science and Technology, 18 Hoang Quoc Viet, 10307 Ha
Noi, Viet Nam},
T.S.~Doan\footnote{\tt dtson@math.ac.vn, \rm Institute of Mathematics, Vietnam Academy for Science and Technology, 18 Hoang Quoc Viet, 10307 Ha
Noi, Viet Nam},
S.~Siegmund\footnote{\tt stefan.siegmund@tu-dresden.de, \rm Center for Dynamics, Department of Mathematics, Technische Universit\"{a}t Dresden,
Zellescher Weg 12-14, 01069 Dresden, Germany}
\;and\; H.T.~Tuan\footnote{\tt httuan@math.ac.vn, \rm Institute of Mathematics, Vietnam Academy of Science and Technology, 18 Hoang Quoc Viet,
10307 Ha Noi, Viet Nam}}
\date{}
\maketitle

\begin{abstract}
We prove the theorem of linearized asymptotic stability for fractional differential equations. More precisely, we show that an equilibrium of a
nonlinear Caputo fractional differential equation is asymptotically stable if its linearization at the equilibrium is asymptotically stable. As
a consequence we extend Lyapunov's first method to fractional differential equations by proving that if the spectrum of the linearization is
contained in the sector $\{\lambda \in \C : |\arg \lambda| > \frac{\alpha \pi}{2}\}$ where $\alpha > 0$ denotes the order of the fractional
differential equation, then the equilibrium of the nonlinear fractional differential equation is asymptotically stable.
\end{abstract}

\section{Introduction}
In recent years, fractional differential equations have attracted increasing interest due to the fact that many mathematical problems in science
and engineering can be modeled by fractional differential equations, see e.g.,\ \cite{Kai,Kilbas,Podlubny}.

One of the most fundamental problems in the qualitative theory of fractional differential equations is stability theory. Following Lyapunov's
seminal 1892 thesis \cite{Lyapunov}, two methods are expected to also work for fractional differential equations:

$\bullet$ Lyapunov's First Method: The method of linearization of the nonlinear equation along an orbit, the study of the resulting linear
variational equation by means of Lyapunov exponents (exponential growth rates of solutions), and the transfer of asymptotic stability from the
linear to the nonlinear equation (the so-called theorem of linearized asymptotic stability).

$\bullet$ Lyapunov's Second Method: The method of Lyapunov functions, i.e., of scalar functions on the state space which decrease along orbits.

There have been many publications on Lypunov's second method for fractional differential equations and we refer the reader to
\cite{Li_Chen_Podlubny} or \cite{LiZhang} for a survey.

In this paper we develop Lyapunov's first method for the trivial solution of a fractional differential equation of order $\alpha \in (0,1)$
\begin{equation}\label{Introduction_Eq1}
^{C}D_{0+}^{\alpha}x(t)=Ax(t)+f(x(t)),
\end{equation}
where $A\in \R^{d\times d}$ and $f:\R^d\rightarrow \R^d$ is a continuously differentiable function satisfying that $f(0)=0$ and $Df(0)=0$ (in
fact, we only require a slightly weaker assumption on $f$). The asymptotic stability of (the trivial solution of) its linerization
\begin{equation}\label{Introduction_Eq2}
^{C}D_{0+}^{\alpha}x(t)=Ax(t)
\end{equation}
is known to be equivalent to its spectrum lying in the sector $\{\lambda \in \C : |\arg \lambda| > \frac{\alpha \pi}{2}\}$, see \cite[Theorem.\
7.20]{Kai}. What remains to be shown is that the asymptotic stability of \eqref{Introduction_Eq2} implies the asymptotic stability of the
trivial solution of \eqref{Introduction_Eq1} which is our main result Theorem \ref{main result} on linearized asymptotic stability for
fractional differential equations.

The linearization method is a useful tool in the investigation of stability of equilibria of nonlinear systems: it reduces the problem to a much simpler problem of stability of autonomous linear systems which can be solved explicitly, hence it gives us a criterion for stability of the equilibrium of the nonlinear system. Our theorem does the same service to the investigation of stability of nonlinear fractional differential equations as its classical counterpart does for the investigation of stability of nonlinear ordinary differential equations. 

Note that there are several people dealing with the stability of fractional differential equations similar to our problem: in \cite{Ahmed} our Theorem \ref{main result} is stated but without a complete proof; 
the main literature we are aware of are four papers \cite{Chen,QianLiAgarwalWong2010,Wen08,Zhang15} where the authors formulated a theorem on linearized stability under various assumptions but all these four papers contain serious flaws in the proofs of the theorem which make the proofs incorrect, a detailed discussion can be found in Remark~\ref{discussion}.

The structure of this paper is as follows: In Section 2, we recall some background on fractional calculus and fractional differential equations.
Section 3 is devoted to the main theorem about linear asymptotic stability for fractional differential equations. Section 4 contains an
application of our main result (Theorem \ref{main result}) and discusses a stabilization by linear feedback of a fractional Lotka-Volterra
system. We conclude this introductory section by introducing some notation which is used throughout the paper.

Let $\R^d$ be endowed with the max norm, i.e.,\ $\|x\|=\max(|x_1|,\dots,|x_d|)$ for all $x=(x_1,\dots,x_d)^{\rT}\in\R^d$, let $\R_{\geq 0}$ be
the set of all nonnegative real numbers and $\left(C_\infty(\R_{\geq 0},\R^d),\|\cdot\|_\infty\right)$ denote the space of all continuous
functions $\xi:\R_{\geq 0}\rightarrow \R^d$ such that
\[
\|\xi\|_\infty:=\sup_{t\in \R_{\geq 0}}\|\xi(t)\|<\infty.
\]
It is well known that $\left(C_\infty(\R_{\geq 0},\R^d),\|\cdot\|_\infty\right)$ is a Banach space.
%

%
%

\section{Preliminaries}
%
%
%
We start this section by briefly recalling a framework of fractional calculus and fractional differential equations. We refer the reader to the
books \cite{Kai,Kilbas} for more details.


Let $\alpha>0$ and $[a,b]\subset \R$. Let $x:[a,b]\rightarrow \R$ be a measurable function such that $x\in L^1([a,b])$, i.e.,\
$\int_a^b|x(\tau)|\;d\tau<\infty$. Then, the \emph{Riemann-Liouville integral operator of order $\alpha$} is defined by
\[
I_{a+}^{\alpha}x(t):=\frac{1}{\Gamma(\alpha)}\int_a^t(t-\tau)^{\alpha-1}x(\tau)\;d\tau\quad \hbox{ for } t\in [a,b),
\]
where the \emph{Euler Gamma function } $\Gamma:(0,\infty)\rightarrow \R$ is defined as
\[
\Gamma(\alpha):=\int_0^\infty \tau^{\alpha-1}\exp(-\tau)\;d\tau,
\]
see e.g.,\ \cite{Kai}. The \emph{Caputo fractional derivative } $^{C}D_{a+}^\alpha x$ of a function $x\in C^m([a,b])$, $m:=\lceil\alpha\rceil$
is the smallest integer larger or equal $\alpha$, which was introduced by Caputo (see e.g.,\ \cite{Kai}), is defined by
\[
^{C}D_{a+}^\alpha x(t):=(I_{a+}^{m-\alpha}D^mx)(t),\qquad \hbox{ for } t\in [a,b),
\]
where $D=\frac{d}{dx}$ is the usual derivative. The Caputo fractional derivative of a $d$-dimensional vector-valued function
$x(t)=(x_1(t),\dots,x_d(t))^{\rT}$ is defined component-wise as
$$^{C}D^\alpha_{0+}x(t)=(^{C}D^\alpha_{0+}x_1(t),\dots,^{C}D^\alpha_{0+}x_d(t))^{\rT}.$$
Since $f$ is Lipschitz continuous, \cite[Theorem 6.5]{Kai} implies unique existence of solutions of initial value problems
\eqref{Introduction_Eq1}, $x(0) = x_0$ for $x_0 \in \R^n$. Let $\phi : I \times \R^d \rightarrow \R^d$, $t \mapsto \phi(t,x_0)$, denote the
solution of \eqref{Introduction_Eq1}, $x(0) = x_0$, on its maximal interval of existence $I = [0,t_{\max}(x_0))$ with $0 < t_{\max}(x_0) \leq
\infty$. We now recall the notions of stability and asymptotic stability of the trivial solution of \eqref{Introduction_Eq1}, cf.\
\cite[Definition 7.2, p.~157]{Kai}.
\begin{definition}\label{DS}
The trivial solution of \eqref{Introduction_Eq1} is called:
\begin{itemize}
\item \emph{stable} if for any $\varepsilon >0$ there exists $\delta=\delta(\varepsilon)>0$ such that for every $\|x_0\|<\delta$ we have
$t_{\max}(x_0) =\infty$ and
\[
\|\phi(t,x_0)\|\leq \eps\qquad\hbox{for } t\ge 0.
\]
\item \emph{unstable} if it is not stable.
\item \emph{attractive} if there exists $\widehat{\delta}> 0$ such that $\lim_{t\to \infty}\phi(t,x_0)=0$ whenever $\|x_0\|<\widehat\delta$.
\end{itemize}
The trivial solution is called \emph{asymptotically stable} if it is both stable and attractive.
\end{definition}
For $f=0$, system \eqref{Introduction_Eq1} reduces to a linear time-invariant fractional differential equation
\begin{equation}\label{Eq2}
^{C}D_{0+}^\alpha x(t)=Ax(t).
\end{equation}
As shown in \cite{Kai}, $E_{\alpha}(t^{\alpha}A)x$ solves \eqref{Eq2} with the initial condition $x(0)=x$, where the \emph{Mittag-Leffler matrix
function} $E_{\alpha,\beta}(A)$, for $\beta\in\R$ and a matrix $A\in\R^{d\times d}$ is defined as
\[
E_{\alpha,\beta}(A):=\sum_{k=0}^\infty \frac{A^k}{\Gamma(\alpha k+\beta)},\qquad E_{\alpha}(A):=E_{\alpha,1}(A).
\]
In the following theorem, we recall a spectral characterization on asymptotic stability of the trivial solution of \eqref{Eq2}.
\begin{theorem}
The trivial solution of \eqref{Eq2} is asymptotically stable if and only if
\[
|\hbox{arg}(\lambda)|>\frac{\alpha \pi}{2}\qquad\hbox{for } \lambda\in\sigma(A),
\]
where $\sigma(A)$ is the spectrum of $A$.
\end{theorem}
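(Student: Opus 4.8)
The plan is to exploit the explicit solution $x(t) = E_\alpha(t^\alpha A)x_0$ of \eqref{Eq2} together with the asymptotic behaviour of the scalar Mittag-Leffler function. First I would put $A$ into Jordan canonical form $A = TJT^{-1}$, so that $E_\alpha(t^\alpha A) = T\,E_\alpha(t^\alpha J)\,T^{-1}$; since the norm induced by $T$ is equivalent to the max norm, stability and attractivity of the trivial solution are governed by $E_\alpha(t^\alpha J)$ block by block. For a single Jordan block $J_\lambda = \lambda I + N$ of size $m$ with eigenvalue $\lambda \in \sigma(A)$ and nilpotent part $N$, the holomorphic functional calculus gives
\[
E_\alpha(t^\alpha J_\lambda) = \sum_{j=0}^{m-1} \frac{t^{\alpha j}}{j!}\,E_\alpha^{(j)}(t^\alpha\lambda)\,N^j ,
\]
so everything reduces to the growth of the derivatives $E_\alpha^{(j)}(z)$ evaluated at $z = t^\alpha\lambda$. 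The decisive observation is that $\arg(t^\alpha\lambda) = \arg(\lambda)$ for all $t > 0$, because $t^\alpha > 0$.

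The main tool is the classical asymptotic expansion of $E_\alpha$ for $0 < \alpha < 1$: fixing an angle $\mu$ with $\frac{\alpha\pi}{2} < \mu < \alpha\pi$, one has
\[
E_\alpha(z) = \frac{1}{\alpha}\,z^{(1-\alpha)/\alpha}\exp\!\big(z^{1/\alpha}\big) - \sum_{k=1}^{N}\frac{z^{-k}}{\Gamma(1-\alpha k)} + O\big(|z|^{-N-1}\big)
\]
uniformly for $|\arg z| \le \mu$, while $E_\alpha(z) = -\sum_{k=1}^{N} z^{-k}/\Gamma(1-\alpha k) + O(|z|^{-N-1})$ uniformly for $\mu \le |\arg z| \le \pi$, with analogous expansions obtained by termwise differentiation for the derivatives $E_\alpha^{(j)}$. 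The exponential factor is controlled by $\mathrm{Re}(z^{1/\alpha}) = |z|^{1/\alpha}\cos(\arg(z)/\alpha)$, whose sign flips exactly at $|\arg z| = \frac{\alpha\pi}{2}$.

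For sufficiency, assume $|\arg\lambda| > \frac{\alpha\pi}{2}$ for every $\lambda \in \sigma(A)$. Then each $z = t^\alpha\lambda$ lies in the region $|\arg z| > \frac{\alpha\pi}{2}$, where the exponential term is absent or exponentially decaying, leaving $E_\alpha^{(j)}(z) = O(|z|^{-1-j})$; hence $t^{\alpha j}E_\alpha^{(j)}(t^\alpha\lambda) = O(t^{-\alpha})$ for every $j$. Combined with continuity of $t \mapsto E_\alpha(t^\alpha A)$ near $t = 0$, this yields a bound $\|E_\alpha(t^\alpha A)\| \le M$ for all $t \ge 0$ and $\|E_\alpha(t^\alpha A)\| \to 0$ as $t \to \infty$, which gives stability (with $\delta = \eps/M$) and attractivity. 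For necessity I argue by contraposition. If some $\lambda$ satisfies $|\arg\lambda| < \frac{\alpha\pi}{2}$, then along the eigenvector solution $t \mapsto E_\alpha(t^\alpha\lambda)v$ the exponent $\mathrm{Re}\big((t^\alpha\lambda)^{1/\alpha}\big) = t\,|\lambda|^{1/\alpha}\cos(\arg(\lambda)/\alpha) \to +\infty$, so the solution is unbounded and the trivial solution is not stable; if $|\arg\lambda| = \frac{\alpha\pi}{2}$, the same exponent is purely imaginary, so $E_\alpha(t^\alpha\lambda)$ stays bounded away from $0$ along a suitable sequence $t_n \to \infty$ and attractivity fails.

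The step I expect to be the main obstacle is the Jordan-block bookkeeping: one must control the derivatives $E_\alpha^{(j)}$ together with the accompanying powers $t^{\alpha j}$ simultaneously, and treat the boundary case $|\arg\lambda| = \frac{\alpha\pi}{2}$ with care, where the oscillatory exponential and the polynomial correction terms have to be balanced to show the solution stays bounded yet does not decay.
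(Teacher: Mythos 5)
The paper does not actually prove this statement itself: its ``proof'' is the citation to Diethelm \cite[Theorem 7.20]{Kai}, and your sketch is essentially the standard argument behind that citation --- Jordan reduction, the functional-calculus formula $E_\alpha(t^\alpha J_\lambda)=\sum_{j=0}^{m-1}\frac{t^{\alpha j}}{j!}E_\alpha^{(j)}(t^\alpha\lambda)N^j$, and the sectorial asymptotic expansions of the Mittag-Leffler function as in \cite[Theorems 1.3 and 1.4]{Podlubny}. The overall route is correct: the sufficiency estimate $t^{\alpha j}E_\alpha^{(j)}(t^\alpha\lambda)=O(t^{-\alpha})$ is right, the exponential blow-up for $|\arg\lambda|<\frac{\alpha\pi}{2}$ is right, and your use of eigenvector solutions in the necessity direction is a good move, since it avoids Jordan bookkeeping exactly where boundary eigenvalues would make it delicate.

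There is, however, one concrete error you should fix: your exponential term carries the wrong prefactor. The expansion for $E_{\alpha,\beta}$ in the sector $|\arg z|\le\mu$ has leading term $\frac{1}{\alpha}z^{(1-\beta)/\alpha}\exp\bigl(z^{1/\alpha}\bigr)$, so for $E_\alpha=E_{\alpha,1}$ the prefactor is $z^{0}=1$, i.e., $E_\alpha(z)=\frac{1}{\alpha}\exp\bigl(z^{1/\alpha}\bigr)+O(|z|^{-1})$; the factor $z^{(1-\alpha)/\alpha}$ you wrote belongs to $E_{\alpha,\alpha}$ (the kernel used elsewhere in this paper, in Proposition \ref{Prp1}). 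As written, your formula contradicts your own boundary-case conclusion: on $|\arg\lambda|=\frac{\alpha\pi}{2}$ one would have $\bigl|z^{(1-\alpha)/\alpha}\bigr|=(t^\alpha|\lambda|)^{(1-\alpha)/\alpha}\sim c\,t^{1-\alpha}\to\infty$, giving an unbounded solution rather than one ``bounded yet not decaying.'' With the correct prefactor, $|E_\alpha(t^\alpha\lambda)|\to\frac{1}{\alpha}$ on the boundary ray and your argument that attractivity fails goes through. Two smaller gaps to close: termwise differentiation of the asymptotic expansions needs justification (Cauchy estimates on proper subsectors, or the contour representation as in Lemma \ref{lemma1}, suffice, because $t^\alpha\lambda$ runs along a ray strictly interior to the sector); and the eigenvalue $\lambda=0$ is not covered by any condition on $\arg\lambda$ and should be dispatched separately via the constant eigenvector solution, which already destroys attractivity.
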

\begin{proof}
See e.g. \cite[Theorem 7.20]{Kai}.
\end{proof}

In the remaining part of this section, we establish some estimates involving the Mittag-Leffler functions. These estimates will be used to prove
the contraction property of the Lyapunov-Perron operator introduced in the next section. For this purpose, let $\gamma(\varepsilon,\theta)$,
$\varepsilon>0,\,\theta\in (0,\pi]$ denote the contour consisting of the following three parts:
\begin{itemize}
\item [(i)] $\text{arg}(z)=-\theta$, $|z|\ge \varepsilon$,
\item [(ii)] $-\theta\le \text{arg}(z)\le \theta$, $|z|=\varepsilon$,
\item [(iii)] $\text{arg}(z)=\theta$, $|z|\ge \varepsilon$.
\end{itemize}
The contour $\gamma(\varepsilon,\theta)$ divides the complex plane $(z)$ into two domains, which we denote by $G^{-}(\varepsilon,\theta)$ and
$G^{+}(\varepsilon,\theta)$. These domains lie correspondingly on the left and on the right side of the contour $\gamma(\varepsilon,\theta)$.
\begin{lemma}\label{lemma1}
Let $\alpha\in (0,1)$ and $\beta$ be an arbitrary complex number. Then for an arbitrary $\varepsilon>0$ and $\theta\in
(\dfrac{\alpha\pi}{2},\alpha\pi)$, we have
\[
E_{\alpha,\beta}(z)=\dfrac{1}{2\alpha\pi
i}\int_{\gamma(\varepsilon,\theta)}\frac{\exp{(\zeta^{\frac{1}{\alpha}})}\zeta^{\frac{1-\beta}{\alpha}}}{\zeta-z}\,d\zeta\quad\hbox{for all }
z\in G^{-}(\varepsilon,\theta).
\]
\end{lemma}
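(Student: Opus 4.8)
The plan is to derive the representation from the power series definition $E_{\alpha,\beta}(z)=\sum_{k=0}^\infty z^k/\Gamma(\alpha k+\beta)$ together with Hankel's integral formula for the reciprocal Gamma function, and then to remove the restriction on $z$ by analytic continuation. First I would record Hankel's formula $\tfrac{1}{\Gamma(s)}=\tfrac{1}{2\pi i}\int_{H}e^{u}u^{-s}\,du$, valid along any Hankel-type contour $H$ on which $\operatorname{Re}(u)\to-\infty$ at both ends and which avoids the cut on the negative real axis. The natural choice is to take $H$ to be the image of $\gamma(\varepsilon,\theta)$ under the map $\zeta\mapsto \zeta^{1/\alpha}$: since on $\gamma(\varepsilon,\theta)$ the argument of $\zeta$ ranges over $[-\theta,\theta]$, the contour $H$ consists of two rays at arguments $\pm\theta/\alpha$ joined by a circular arc of radius $\varepsilon^{1/\alpha}$. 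Because $\theta\in(\tfrac{\alpha\pi}{2},\alpha\pi)$ we have $\theta/\alpha\in(\tfrac{\pi}{2},\pi)$, so along the rays $\operatorname{Re}(u)=|u|\cos(\theta/\alpha)\to-\infty$ (ensuring convergence) while $H$ still lies in $\{|\arg u|<\pi\}$; hence $H$ is an admissible Hankel contour for the principal branch and Hankel's formula applies to each $1/\Gamma(\alpha k+\beta)$.

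Next I would assume temporarily that $|z|<\varepsilon$. Substituting Hankel's formula into the series and noting that $|z|<\varepsilon\le|\zeta|=|u|^{\alpha}$ on the whole contour, the geometric series $\sum_{k\ge0}(zu^{-\alpha})^{k}$ converges uniformly on $H$; together with the integrability of $e^{u}u^{-\beta}$ along $H$ this justifies interchanging summation and integration. Summing the geometric series gives
\[
E_{\alpha,\beta}(z)=\frac{1}{2\pi i}\int_{H}e^{u}u^{-\beta}\,\frac{1}{1-zu^{-\alpha}}\,du=\frac{1}{2\pi i}\int_{H}\frac{e^{u}u^{\alpha-\beta}}{u^{\alpha}-z}\,du.
\]
Reversing the substitution via $\zeta=u^{\alpha}$, $du=\tfrac{1}{\alpha}\zeta^{(1-\alpha)/\alpha}\,d\zeta$, the powers of $\zeta$ combine as $\zeta^{1-\beta/\alpha}\cdot\zeta^{(1-\alpha)/\alpha}=\zeta^{(1-\beta)/\alpha}$, and I obtain exactly
\[
E_{\alpha,\beta}(z)=\frac{1}{2\alpha\pi i}\int_{\gamma(\varepsilon,\theta)}\frac{\exp(\zeta^{1/\alpha})\,\zeta^{(1-\beta)/\alpha}}{\zeta-z}\,d\zeta,
\]
which is the claimed identity for $|z|<\varepsilon$.

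Finally I would remove the hypothesis $|z|<\varepsilon$ by analytic continuation. The open disk $\{|z|<\varepsilon\}$ does not meet $\gamma(\varepsilon,\theta)$ (its only points of modulus $\varepsilon$ would lie on the arc, which the open disk excludes) and is connected, so it lies entirely in one of the two domains; since the segment of the negative real axis near the origin, e.g.\ $z=-\varepsilon/2$, can be joined to $-\infty$ without crossing $\gamma(\varepsilon,\theta)$, the disk lies in the left domain $G^{-}(\varepsilon,\theta)$. The left-hand side $E_{\alpha,\beta}$ is entire, and the right-hand side is holomorphic in $z$ throughout $G^{-}(\varepsilon,\theta)$ because the integrand is holomorphic in $z$ for $z$ off the contour and the integral converges locally uniformly in $z$ on $G^{-}$, so one may differentiate under the integral sign. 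As both sides are holomorphic on the connected open set $G^{-}(\varepsilon,\theta)$ and agree on the subdisk $\{|z|<\varepsilon\}$, the identity theorem yields equality on all of $G^{-}(\varepsilon,\theta)$.

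The main obstacle is the analytic bookkeeping on the unbounded contour: one must verify that $\operatorname{Re}(\zeta^{1/\alpha})\to-\infty$ along the rays, which is precisely where the hypothesis $\theta>\tfrac{\alpha\pi}{2}$ enters and simultaneously guarantees the absolute convergence of all the integrals and the legitimacy of the term-by-term integration, and that the change of variables $\zeta=u^{\alpha}$ respects the chosen branch so that $H$ is a genuine Hankel contour lying in $\{|\arg u|<\pi\}$, which uses $\theta<\alpha\pi$. Once these convergence and branch issues are dealt with, the remaining computations are routine.
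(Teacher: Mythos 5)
Your proposal is correct, but note that the paper does not actually prove Lemma~\ref{lemma1} at all: its ``proof'' is a bare citation to Podlubny \cite[Theorem 1.3, p.~30]{Podlubny}. What you have written is, in effect, a reconstruction of the argument in that reference: Hankel's formula $\frac{1}{\Gamma(s)}=\frac{1}{2\pi i}\int_H e^u u^{-s}\,du$ applied termwise to the series for $E_{\alpha,\beta}$, summation of the geometric series for $|z|<\varepsilon$, the substitution $\zeta=u^{\alpha}$ carrying $H$ back to $\gamma(\varepsilon,\theta)$, and the identity theorem to extend to all of $G^{-}(\varepsilon,\theta)$. All the delicate points are handled correctly: the hypothesis $\theta>\frac{\alpha\pi}{2}$ gives $\operatorname{Re}(\zeta^{1/\alpha})=|\zeta|^{1/\alpha}\cos(\theta/\alpha)\to-\infty$ on the rays (so all integrals converge absolutely and, since $|z|/|\zeta|\le |z|/\varepsilon<1$ uniformly on the contour, the sum--integral interchange is justified by dominated convergence), while $\theta<\alpha\pi$ keeps $H$ inside $\{|\arg u|<\pi\}$ so the principal branches of $u^{-s}$ and $\zeta^{1/\alpha}$ are consistent and the deformation of the standard Hankel contour to rays at angle $\pm\theta/\alpha\in(\frac{\pi}{2},\pi)$ is legitimate; your exponent bookkeeping $\zeta^{1-\beta/\alpha}\cdot\zeta^{(1-\alpha)/\alpha}=\zeta^{(1-\beta)/\alpha}$ and the factor $\frac{1}{2\alpha\pi i}$ check out, and your argument that the disk $\{|z|<\varepsilon\}$ lies in $G^{-}(\varepsilon,\theta)$ (connect $0$ to $-\infty$ without crossing the contour, since the contour satisfies $|\arg\zeta|\le\theta<\pi$ and $|\zeta|\ge\varepsilon$) is exactly what is needed before invoking the identity theorem. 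So your proof is a self-contained substitute for the citation; the only thing it ``costs'' relative to the paper is length, and what it buys is that the lemma no longer rests on an external reference.
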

\begin{proof}
See \cite[Theorem 1.3, p.\ 30]{Podlubny}
\end{proof}
\begin{proposition}\label{Prp1}
Let $\lambda$ be an arbitrary complex number with $\frac{\alpha \pi }{2}< |\text{arg}(\lambda)|\leq \pi$. Then, the following statements hold:
\begin{itemize}
\item[(i)] There exists a positive constant $M(\alpha,\lambda)$ and a positive number $t_0$ such that
\[
|t^{\alpha-1}E_{\alpha,\alpha}(\lambda t^{\alpha})|<\frac{M(\alpha,\lambda)}{t^{\alpha+1}}\quad  \hbox{for any } t>t_0.
\]
\item[(ii)] There exists a positive constant $C(\alpha,\lambda)$ such that
\[
\sup_{t\ge 0}\int_0^t|(t-s)^{\alpha-1}E_{\alpha,\alpha}(\lambda(t-s)^\alpha)|\,ds<C(\alpha,\lambda).
\]
\end{itemize}
\end{proposition}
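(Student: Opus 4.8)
The plan is to reduce both parts to a single asymptotic bound, namely that $|E_{\alpha,\alpha}(z)|\le C|z|^{-2}$ as $|z|\to\infty$ along the ray $\arg z=\arg\lambda$, which I extract directly from the contour representation of Lemma~\ref{lemma1}. Since $\frac{\alpha\pi}{2}<|\arg\lambda|\le\pi$ and $\alpha\in(0,1)$, I first fix $\theta\in(\frac{\alpha\pi}{2},\alpha\pi)$ with $\theta<|\arg\lambda|$. Then for every $t>0$ the point $z:=\lambda t^{\alpha}$ has $\arg z=\arg\lambda$, so it lies on a fixed ray strictly inside $G^{-}(\varepsilon,\theta)$ and Lemma~\ref{lemma1} applies with $\beta=\alpha$. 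Splitting the Cauchy kernel as $\frac{1}{\zeta-z}=-\frac1z-\frac{\zeta}{z(z-\zeta)}$ and inserting it into the integral gives
\[
E_{\alpha,\alpha}(z)=-\frac{c_0}{z}-\frac{1}{2\alpha\pi i\,z}\int_{\gamma(\varepsilon,\theta)}\frac{\exp(\zeta^{1/\alpha})\,\zeta^{1/\alpha}}{z-\zeta}\,d\zeta,\qquad c_0:=\frac{1}{2\alpha\pi i}\int_{\gamma(\varepsilon,\theta)}\exp(\zeta^{1/\alpha})\,\zeta^{\frac{1-\alpha}{\alpha}}\,d\zeta .
\]
The decisive point is that $c_0=0$: the substitution $u=\zeta^{1/\alpha}$ turns $\gamma(\varepsilon,\theta)$ into a Hankel contour and collapses $c_0$ to $\frac{1}{2\pi i}\int e^{u}\,du=\frac{1}{\Gamma(0)}=0$ by Hankel's integral formula for the reciprocal Gamma function. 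This cancellation is the whole crux of the estimate, and it is the one step I expect to be the genuine obstacle: without it one only obtains $|E_{\alpha,\alpha}(z)|=O(|z|^{-1})$, which in (i) would produce the non-integrable bound $O(t^{-1})$, whereas the vanishing of the $z^{-1}$ coefficient upgrades the decay to $O(|z|^{-2})$.

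It then remains to bound the surviving integral. Along the rays $\arg\zeta=\pm\theta$ one has $\operatorname{Re}(\zeta^{1/\alpha})=|\zeta|^{1/\alpha}\cos(\theta/\alpha)$ with $\cos(\theta/\alpha)<0$, since $\theta/\alpha\in(\frac{\pi}{2},\pi)$; hence $|\exp(\zeta^{1/\alpha})|$ decays like $\exp(-\kappa|\zeta|^{1/\alpha})$ for some $\kappa>0$, and $\int_{\gamma(\varepsilon,\theta)}|\exp(\zeta^{1/\alpha})|\,|\zeta|^{1/\alpha}\,|d\zeta|$ converges to a finite constant. Moreover $z$ and every $\zeta\in\gamma(\varepsilon,\theta)$ are angularly separated by at least $|\arg\lambda|-\theta>0$, so elementary trigonometry gives $|z-\zeta|\ge c\,|z|$ with $c>0$ independent of $t$. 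Combining these yields $|E_{\alpha,\alpha}(z)|\le C|z|^{-2}=C|\lambda|^{-2}t^{-2\alpha}$ for all large $t$, and multiplying by $t^{\alpha-1}$ proves (i) with $M(\alpha,\lambda)=C|\lambda|^{-2}$ and a suitable $t_0$.

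For (ii) I would substitute $u=t-s$ to rewrite the integral as $\int_0^t|u^{\alpha-1}E_{\alpha,\alpha}(\lambda u^{\alpha})|\,du$; as the integrand is nonnegative, its supremum over $t\ge0$ equals $\int_0^{\infty}|u^{\alpha-1}E_{\alpha,\alpha}(\lambda u^{\alpha})|\,du$, so it suffices to prove finiteness of this integral, which I split at the $t_0$ from (i). On $[0,t_0]$ the map $u\mapsto E_{\alpha,\alpha}(\lambda u^{\alpha})$ is continuous, hence bounded, while $u^{\alpha-1}$ is integrable near $0$ because $\alpha-1\in(-1,0)$, so the contribution is finite. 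On $[t_0,\infty)$ part (i) gives $|u^{\alpha-1}E_{\alpha,\alpha}(\lambda u^{\alpha})|<M(\alpha,\lambda)\,u^{-\alpha-1}$, and $\int_{t_0}^{\infty}u^{-\alpha-1}\,du<\infty$ since $\alpha+1>1$. Adding the two pieces gives a finite value, and choosing $C(\alpha,\lambda)$ to be any constant strictly larger than it secures the strict inequality and completes the argument. Everything outside the identity $c_0=1/\Gamma(0)=0$ is routine decay-and-integrability bookkeeping.
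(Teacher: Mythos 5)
Your proof is correct and takes essentially the same route as the paper: the contour representation of Lemma~\ref{lemma1} with $\beta=\alpha$, splitting the Cauchy kernel to kill the $1/z$ term, an angular-separation bound $|z-\zeta|\geq c|z|$ on the contour to get $|E_{\alpha,\alpha}(\lambda t^{\alpha})|=O(t^{-2\alpha})$, and for (ii) the same decomposition of $\int_0^\infty u^{\alpha-1}|E_{\alpha,\alpha}(\lambda u^{\alpha})|\,du$ at $t_0$ (your boundedness argument near $u=0$ replaces the paper's identity $\int_0^t s^{\alpha-1}E_{\alpha,\alpha}(|\lambda| s^{\alpha})\,ds=t^{\alpha}E_{\alpha,\alpha+1}(|\lambda| t^{\alpha})$, to the same effect). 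A small merit of your write-up is that you verify explicitly, via Hankel's formula $c_0=1/\Gamma(0)=0$, the cancellation that the paper uses silently in passing from Lemma~\ref{lemma1} to \eqref{New_Eq1}.
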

\begin{proof}
\noindent (i) Note that $\frac{\alpha \pi}{2}< |\text{arg}(\lambda)|\leq \pi$. Hence, there exist $\theta\in (\dfrac{\alpha
\pi}{2},|\text{arg}(\lambda)|)$ and $\theta_0\in (0,\dfrac{\pi\alpha}{2})$ such that $|\text{arg}(\lambda)|- \theta >\theta_0$. Since
$\frac{\alpha\pi}{2}<|\hbox{arg}(\lambda)|\leq \pi$ it follows that $\lambda t^\alpha\in G^{-}(1,\theta+\theta_0)$ for all $t>0$. Thus,
according to Lemma \ref{lemma1} we obtain that
 $$
 E_{\alpha,\alpha}(\lambda t^{\alpha})=
 \dfrac{1}{2\alpha\pi i}\int_{\gamma(1,\theta)}\dfrac{\exp{(\zeta^{\frac{1}{\alpha}})}\zeta^{\frac{1-\alpha}{\alpha}}}{\zeta-\lambda t^{\alpha}}\,d\zeta
 \quad\hbox{for all } t>0.
 $$
Using the identity $\dfrac{1}{\zeta-z}=-\dfrac{1}{z}+\dfrac{\zeta}{z(\zeta-z)}$ leads to
\begin{equation}\label{New_Eq1}
E_{\alpha,\alpha}(\lambda t^{\alpha})= \dfrac{1}{2\alpha\pi
i}\int_{\gamma(1,\theta)}\dfrac{\exp{(\zeta^{\frac{1}{\alpha}})}\zeta^{\frac{1}{\alpha}}}{\lambda t^\alpha (\zeta-\lambda t^{\alpha})}\,d\zeta
\quad\hbox{for all } t>0.
\end{equation}
Let  $t_0:=\dfrac{1}{|\lambda|^{\frac{1}{\alpha}}(1-\sin\theta_0)^{\frac{1}{\alpha}}}$. Then, for all $t\geq t_0$ we have $|\lambda
t^{\alpha}|\geq \frac{1}{1-\sin\theta_0}$. Thus,
\[
|\zeta-\lambda t^{\alpha}|\ge |\lambda t^{\alpha}|\sin \theta_0\quad \hbox{for all } \zeta \in \gamma(1,\theta),
\]
which together with \eqref{New_Eq1} implies that
$$
|E_{\alpha,\alpha}(\lambda t^\alpha)|\le \dfrac{\int_{\gamma(1,\theta)}|\exp{(\zeta^{\frac{1}{\alpha}})}\zeta^{\frac{1}{\alpha}}|d\zeta}{2
\alpha \pi |\lambda|^2 \sin\theta_0}\dfrac{1}{t^{2\alpha}}\quad \hbox{for all } t\geq t_0.
$$
Consequently, for all $t\geq t_0$
$$
|t^{\alpha-1}E_{\alpha,\alpha}(\lambda t^{\alpha})|\le \dfrac{M(\alpha,\lambda)}{t^{\alpha+1}} \quad\hbox{where }
M(\alpha,\lambda):=\dfrac{\int_{\gamma(1,\theta)}|\exp{(\zeta^{\frac{1}{\alpha}})}\zeta^{\frac{1}{\alpha}}|d\zeta}{2\alpha \pi
|\lambda|^2\sin\theta_0}.
$$
\noindent (ii) In what follows, we treat separately two cases $t\leq t_0$ and $t> t_0$, where $t_0$ is defined as in the statement (i).

\noindent \emph{Case 1: $t\leq t_0$:} Note that
\[
\int_0^t s^{\alpha-1}E_{\alpha,\alpha}(\lambda s^\alpha)\;ds=t^\alpha E_{\alpha,\alpha+1}(\lambda t^\alpha),
\]
see, e.g., \cite[pp. 24]{Podlubny}. Therefore, we get that
\begin{eqnarray*}
\int_0^t\left|(t-s)^{\alpha-1}E_{\alpha,\alpha}(\lambda(t-s)^{\alpha})\right|\;ds &\leq&
\int_0^t (t-s)^{\alpha-1}E_{\alpha,\alpha}(|\lambda| (t-s)^\alpha)\,ds\\
&=&t^\alpha E_{\alpha,\alpha+1}(|\lambda| t^\alpha)
\\
&\leq& t_0^\alpha E_{\alpha,\alpha+1}(|\lambda| t_0^\alpha).
\end{eqnarray*}
\noindent \emph{Case 2: $t> t_0$:} From (i), we see that
\begin{equation}\label{Eq13}
\int_0^{t-t_0}\left| (t-s)^{\alpha-1}E_{\alpha,\alpha}(\lambda(t-s)^{\alpha})\right|\;ds \leq
\int_0^{t-t_0}\dfrac{M(\alpha,\lambda)}{(t-s)^{\alpha+1}}\;ds \leq \dfrac{M(\alpha,\lambda)}{\alpha t_0^\alpha}.
\end{equation}
Using a similar statement as in Case 1, we obtain that
\[
\int_{t-t_0}^{t}\left| (t-s)^{\alpha-1}E_{\alpha,\alpha}(\lambda(t-s)^{\alpha})\right|\;ds \leq t_0^\alpha
E_{\alpha,\alpha+1}(|\lambda|t_0^\alpha),
\]
which together with \eqref{Eq13} implies that
\[
\int_{0}^{t} \left|(t-s)^{\alpha-1}E_{\alpha,\alpha}(\lambda(t-s)^{\alpha})\right| \;ds\leq C(\alpha,\lambda),
\]
where  $C(\alpha,\lambda):=\dfrac{M(\alpha,\lambda)}{\alpha t_0^\alpha}+t_0^\alpha E_{\alpha,\alpha+1}(|\lambda| t_0^\alpha)$. The proof is
complete.
\end{proof}
\section{Linearized Asymptotic Stability for Fractional Differential Equations}

We now state the main result of this paper and use the abbreviation $\ell_f(r)$ to denote the Lipschitz constant 
\[
  \ell_f(r):=\sup_{\substack{x,y\in B_{\R^d}(0,r)\\ x\neq y}}\frac{\|f(x)-f(y)\|}{\|x-y\|}
\]
of a locally Lipschitz continuous function $f$ on the ball $B_{\R^d}(0,r):=\{x\in\R^d: \|x\|\leq r\}$.

\begin{theorem}[Linearized Asymptotic Stability for Fractional Differential Equations]\label{main result}
Consider the nonlinear fractional differential equation \eqref{Introduction_Eq1}.  Let $\hat\lambda_1,\dots,\hat\lambda_m$ denote the eigenvalues of $A$
and assume that
\[
|\hbox{arg}(\hat\lambda_i)|>\frac{\alpha \pi}{2},\qquad i=1,\dots,m.
\]
Suppose that the nonlinear term $f:\R^d\rightarrow \R^d$ is a locally Lipschitz continuous function satisfying that
\begin{equation}\label{Lipschitz condition}
f(0)=0,\qquad \lim_{r\to 0}\ell_f(r)=0.
\end{equation}
Then, the trivial solution of \eqref{Introduction_Eq1} is asymptotically stable.
\end{theorem}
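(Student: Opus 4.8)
The plan is to recast \eqref{Introduction_Eq1} as a Volterra integral equation and run a Lyapunov--Perron fixed-point argument in the Banach space $\left(C_\infty(\R_{\geq 0},\R^d),\|\cdot\|_\infty\right)$. First I would use the fractional variation-of-constants formula: the solution of \eqref{Introduction_Eq1} with $x(0)=x_0$ satisfies
\[
\phi(t,x_0)=E_\alpha(t^\alpha A)x_0+\int_0^t(t-s)^{\alpha-1}E_{\alpha,\alpha}((t-s)^\alpha A)\,f(\phi(s,x_0))\,ds,
\]
and conversely every bounded continuous solution of this equation is a global solution of \eqref{Introduction_Eq1}. The essential preparatory step is to lift the scalar bounds of Proposition~\ref{Prp1} to the matrix-valued Mittag-Leffler functions. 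Because the eigenvalues of $A$ all lie in the sector $|\arg(\hat\lambda_i)|>\frac{\alpha\pi}{2}$, passing to Jordan normal form (equivalently, using the contour representation of Lemma~\ref{lemma1} with the resolvent $(\zeta I-A)^{-1}$ replacing $\frac{1}{\zeta-z}$) produces finite constants
\[
K:=\sup_{t\geq 0}\|E_\alpha(t^\alpha A)\|,\qquad C:=\sup_{t\geq 0}\int_0^t\|(t-s)^{\alpha-1}E_{\alpha,\alpha}((t-s)^\alpha A)\|\,ds,
\]
and the pointwise decay $\|E_\alpha(t^\alpha A)x_0\|\to 0$ as $t\to\infty$; the nilpotent parts of the Jordan blocks only contribute polynomial factors that are absorbed by the $t^{-\alpha}$ decay carried by each eigenvalue.

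With these constants fixed, I would choose $r>0$ small enough that $C\,\ell_f(r)\leq\frac12$, which is possible since $\lim_{r\to 0}\ell_f(r)=0$, and define on the closed ball $B_r:=\{\xi\in C_\infty(\R_{\geq 0},\R^d):\|\xi\|_\infty\leq r\}$ the operator
\[
(\mathcal{T}_{x_0}\xi)(t):=E_\alpha(t^\alpha A)x_0+\int_0^t(t-s)^{\alpha-1}E_{\alpha,\alpha}((t-s)^\alpha A)\,f(\xi(s))\,ds.
\]
Since $f(0)=0$ gives $\|f(\xi(s))\|\leq \ell_f(r)\|\xi(s)\|\leq \ell_f(r)\,r$, the estimates for $K$ and $C$ yield $\|\mathcal{T}_{x_0}\xi\|_\infty\leq K\|x_0\|+C\ell_f(r)\,r$, so $\mathcal{T}_{x_0}$ maps $B_r$ into itself whenever $\|x_0\|\leq\frac{r}{2K}$, while $\|\mathcal{T}_{x_0}\xi-\mathcal{T}_{x_0}\eta\|_\infty\leq C\ell_f(r)\|\xi-\eta\|_\infty\leq\frac12\|\xi-\eta\|_\infty$ makes it a contraction. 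Banach's fixed-point theorem then gives a unique $\xi^\ast\in B_r$; by the variation-of-constants equivalence together with uniqueness of solutions (which holds as $f$ is locally Lipschitz), $\xi^\ast=\phi(\cdot,x_0)$, so in particular $t_{\max}(x_0)=\infty$ and $\|\phi(t,x_0)\|\leq r$ for all $t\geq 0$. Given $\varepsilon>0$, taking $r\leq\varepsilon$ and $\delta:=\frac{r}{2K}$ then yields stability.

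The subtle point is \emph{attractivity}: unlike the classical ODE case, the Mittag-Leffler functions decay only polynomially, so there is no exponential dichotomy on which to base a Gronwall argument. Instead I would exploit that the contraction factor is strictly less than one. Put $a:=\limsup_{t\to\infty}\|\phi(t,x_0)\|\in[0,r]$ and insert $\phi(\cdot,x_0)$ into its own variation-of-constants representation, splitting $\int_0^t=\int_0^T+\int_T^t$. As $t\to\infty$ the term $E_\alpha(t^\alpha A)x_0$ and, by the pointwise bound of Proposition~\ref{Prp1}(i) in its matrix form, the integral over $[0,T]$ both vanish; choosing $T$ so large that $\|\phi(s,x_0)\|\leq a+\varepsilon$ for $s\geq T$ and invoking $C$ on the tail gives
\[
\limsup_{t\to\infty}\|\phi(t,x_0)\|\leq C\,\ell_f(r)\,(a+\varepsilon).
\]
Letting $\varepsilon\to 0$ forces $a\leq C\ell_f(r)\,a\leq\frac12 a$, hence $a=0$ and $\phi(t,x_0)\to 0$. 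The main obstacle is therefore not the fixed-point step itself but assembling the correct matrix Mittag-Leffler estimates and recognizing that the strict inequality $C\ell_f(r)<1$ is exactly what annihilates the $\limsup$ in the absence of exponential decay.
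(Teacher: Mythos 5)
Your overall architecture coincides with the paper's proof step by step: the variation-of-constants reformulation (the paper's Theorem \ref{Var_Const_Form}), a Lyapunov--Perron operator on $C_\infty(\R_{\geq 0},\R^d)$ made contractive on a small ball $B_r$ by $\lim_{r\to 0}\ell_f(r)=0$, stability from the fixed point plus uniqueness of solutions, and attractivity via exactly the paper's $\limsup$ device --- splitting $\int_0^t=\int_0^{T}+\int_{T}^t$, killing the $[0,T]$ piece with the pointwise bound of Proposition \ref{Prp1}(i) and the initial term with $E_\alpha(t^\alpha\,\cdot)\to 0$, then letting the strict inequality $C\,\ell_f(r)<1$ force $a=\limsup_{t\to\infty}\|\phi(t,x_0)\|=0$. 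Where you genuinely diverge is the treatment of the Jordan structure. You keep the full matrix $A$ and posit matrix-valued Mittag-Leffler bounds ($K$, $C$, and pointwise decay). The paper instead conjugates $A$ to Jordan form and rescales each block by $P_i=\mathrm{diag}(1,\delta,\dots,\delta^{d_i-1})$, which shrinks the nilpotent part to norm $\delta$ and moves it \emph{into the nonlinearity} $h$; the linear part then consists of scalar blocks $J_i=\lambda_i\,\mathrm{id}$, so only the scalar estimates of Proposition \ref{Prp1} are ever needed, applied eigenvalue by eigenvalue. The price is bookkeeping: $\lim_{r\to 0}\ell_h(r)=\delta$ rather than $0$, so contraction requires fixing $\delta:=\frac{1}{2C(\alpha,\lambda)}$ after computing $C(\alpha,\lambda)$ --- possible precisely because $C(\alpha,\lambda)$ does not depend on $\delta$. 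Your route buys a cleaner operator and no $\delta$-bookkeeping; the paper's buys purely scalar analysis.

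The one point you must not wave at is the matrix lemma itself, and your stated justification for it is incorrect. You write that the nilpotent parts ``only contribute polynomial factors that are absorbed by the $t^{-\alpha}$ decay carried by each eigenvalue.'' For a single Jordan block $\lambda I+N$ one has
\begin{equation*}
E_{\alpha,\alpha}\bigl(t^\alpha(\lambda I+N)\bigr)=\sum_{j=0}^{d_1-1}\frac{t^{\alpha j}N^j}{j!}\,E^{(j)}_{\alpha,\alpha}(\lambda t^\alpha),
\end{equation*}
and a factor $t^{\alpha j}$ is certainly \emph{not} absorbed by decay of order $t^{-\alpha}$, nor by the $t^{-2\alpha}$ decay of $E_{\alpha,\alpha}$ itself, once $j\geq 2$. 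What actually saves the estimate is that the derivatives decay strictly faster, $E^{(j)}_{\alpha,\alpha}(z)=O(z^{-2-j})$ in the sector $|\arg(z)|>\frac{\alpha\pi}{2}$, which exactly compensates $t^{\alpha j}$ and restores the $t^{-2\alpha}$ rate. Equivalently, in your resolvent version of Lemma \ref{lemma1}, one expands blockwise $(\zeta I-t^\alpha A)^{-1}=\sum_{j}(t^\alpha N)^j(\zeta-t^\alpha\lambda)^{-j-1}$ and uses that $|\zeta-t^\alpha\lambda|^{-1}=O(t^{-\alpha})$ uniformly on $\gamma(1,\theta)$ for $t\geq t_0$, so the extra resolvent powers offset the $t^{\alpha j}$ growth (the range $t\leq t_0$ being handled separately, as in Case 1 of Proposition \ref{Prp1}(ii)). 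So your key lemma is true and provable, but it requires this derivative- or resolvent-power argument to be spelled out; it does not follow from the reasoning you gave, and avoiding the need for any such matrix asymptotics is precisely what the paper's $\delta$-rescaling is designed to accomplish.
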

Before going to the proof of this theorem, we need two preparatory steps:
\begin{itemize}
\item Transformation of the linear part: The aim of this step is to transform the linear part of \eqref{Introduction_Eq1} to a matrix which is "very close" to a diagonal matrix. This technical step reduces the difficulty in the estimation of the operators constructed in the next step.
\item Construction of an appropriate Lyapunov-Perron operator: In this step, our aim is to present a family of operators with the property that any solution of the nonlinear system \eqref{Introduction_Eq1} can be interpreted as a fixed point of these operators. Furthermore, we show that these operators are contractive and hence the fixed points of these operators can be estimated and can be shown to tend to zero when time goes to infinity.
\end{itemize}
We are now presenting the details of these preparatory steps.
\subsection{Transformation of the linear part}
Using \cite[Theorem 6.37, pp.~146]{Shilov}, there exists a nonsingular matrix $T\in\C^{d\times d}$ transforming $A$ into the Jordan normal form,
i.e.,
\[
T^{-1}A T=\hbox{diag}(A_1,\dots,A_n),
\]
where for $i=1,\dots,n$ the block $A_i$ is of the following form
\[
A_i=\lambda_i\, \id_{d_i\times d_i}+\eta_i\, N_{d_i\times d_i},
\]
where $\eta_i\in\{0,1\}$, $\lambda_i \in \{\hat\lambda_1,\ldots,\hat\lambda_m\}$,  and the nilpotent matrix $N_{d_i\times d_i}$ is given by
\[
N_{d_i\times d_i}:= \left(
      \begin{array}{*7{c}}
      0  &     1         &    0      & \cdots        &  0        \\
        0        & 0    &    1     &   \cdots      &              0\\
        \vdots &\vdots        &  \ddots         &          \ddots &\vdots\\
        0 &    0           &\cdots           &  0 &          1 \\

        0& 0  &\cdots                                          &0         & 0 \\
      \end{array}
    \right)_{d_i \times d_i}.
\]
Let us notice that by this transformation we go from the field of real numbers  out to the field of complex numbers, and we may remain in the field of real numbers only if all eigenvalues of $A$ are real. For a general real-valued matrix $A$ we may simply embed $\R$ into $\C$, consider $A$ as a complex-valued matrix and thus get the above Jordan form for $A$. Alternatively, we may use a more cumbersome real-valued Jordan form (for discussion of a similar issue for FDE see also Diethelm~\cite[pp.~152--153]{Kai}). For simplicity we use the embedding method and omit the discussion on how to return back to the field of real numbers. Note also that this kind of technique is well known in the theory of ordinary differential equations. 

Let $\delta$ be an arbitrary but fixed positive number. Using the transformation $P_i:=\textup{diag}(1,\delta,\dots,\delta^{d_i-1})$, we obtain
that
\begin{equation*}
P_i^{-1} A_i P_i=\lambda_i\, \id_{d_i\times d_i}+\delta_i\, N_{d_i\times d_i},
\end{equation*}
$\delta_i\in \{0,\delta\}$. Hence, under the transformation $y:=(TP)^{-1}x$ system \eqref{Introduction_Eq1} becomes
\begin{equation}\label{NewSystem}
^{C}D_{0+}^\alpha y(t)=\hbox{diag}(J_1,\dots,J_n)y(t)+h(y(t)),
\end{equation}
where $J_i:=\lambda_i \id_{d_i\times d_i}$ for $i=1,\dots,n$ and the function $h$ is given by
\begin{equation}\label{Eq3}
h(y):=\text{diag}(\delta_1N_{d_1\times d_1},\dots,\delta_nN_{d_n\times d_n})y+(TP)^{-1}f(TPy).
\end{equation}
\begin{remark}\label{Remark1}
Note that the map $x\mapsto \text{diag}(\delta_1N_{d_1\times d_1},\dots,\delta_nN_{d_n\times d_n})x$ is a Lipschitz continuous function with
Lipschitz constant $\delta$. Thus, by \eqref{Lipschitz condition} we have
\[
h(0)=0,\qquad \lim_{r\to 0}\ell_h(r)= \left\{
\begin{array}{ll}
\delta & \hbox{if there exists } \delta_i=\delta,\\[1ex]
0 & \hbox{otherwise}.
\end{array}
\right.
\]
\end{remark}
\begin{remark}\label{Remark2}
The type of stability of the trivial solution of equations \eqref{Introduction_Eq1} and \eqref{NewSystem} are the same,
 i.e., they are both stable, attractive or unstable.
\end{remark}
\subsection{Construction of an appropriate Lyapunov-Perron operator}
In this subsection, we concentrate only on  equation \eqref{NewSystem}. We are now introducing a Lyapunov-Perron operator associated with
\eqref{NewSystem}. Before doing this, we discuss some conventions which are used in the remaining part of this section: The space $\R^d$ can be
written as $\R^d=\R^{d_1}\times\dots\times\R^{d_n}$. A vector $x\in\R^d$ can be written component-wise as $x=(x^1,\dots,x^n)$.

For any $x=(x^1,\dots,x^n)\in \R^{d}=\R^{d_1}\times\dots\times \R^{d_n}$, the operator $\mathcal{T}_{x}: C_\infty(\R_{\geq 0},\R^d)\rightarrow
C_\infty(\R_{\geq 0},\R^d)$ is defined by
\[
(\mathcal{T}_{x}\xi)(t)=((\mathcal{T}_{x}\xi)^1(t),\dots,(\mathcal{T}_{x}\xi)^n(t))\qquad\hbox{for } t\in\R_{\geq 0},
\]
where for $i=1,\dots,n$
\begin{eqnarray*}
(\mathcal{T}_{x}\xi)^i(t) = E_\alpha(t^\alpha J_i)x^i+ \int_0^t (t-\tau)^{\alpha-1}E_{\alpha,\alpha}((t-\tau)^\alpha J_i)h^i(\xi(\tau))\;d\tau,
\end{eqnarray*}
is called the \emph{Lyapunov-Perron operator associated with \eqref{NewSystem}}. The role of this operator is stated in the following theorem.
\begin{theorem}\label{Var_Const_Form}
Let $x\in\R^{d}$ be arbitrary and $\xi:\R_{\geq 0}\rightarrow \R^{d}$ be a continuous function satisfying that $\xi(0)=x$. Then, the following
statements are equivalent:
\begin{itemize}
\item [(i)] $\xi$ is a solution of \eqref{NewSystem} satisfying the initial condition $x(0)=x$.
\item [(ii)] $\xi$ is a fixed point of the operator $\mathcal {T}_{x}$.
\end{itemize}
\end{theorem}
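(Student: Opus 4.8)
The plan is to prove the equivalence by establishing the standard variation-of-constants formula for the Caputo fractional differential equation \eqref{NewSystem}. Because the system is block-diagonal with blocks $J_i = \lambda_i\, \id_{d_i\times d_i}$, the linear part decouples into scalar-matrix pieces, and it suffices to verify the equivalence componentwise, i.e., for each index $i\in\{1,\dots,n\}$ and the corresponding block equation $^{C}D_{0+}^\alpha y^i(t) = J_i\, y^i(t) + h^i(\xi(t))$. The key analytic input is that the Mittag-Leffler functions $E_\alpha(t^\alpha J_i)$ and $t^{\alpha-1}E_{\alpha,\alpha}(t^\alpha J_i)$ are precisely the fundamental solutions associated with the linear operator $^{C}D_{0+}^\alpha - J_i$; this is already implicit in the statement (recalled earlier in the excerpt) that $E_\alpha(t^\alpha A)x$ solves \eqref{Eq2} with $x(0)=x$.

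First I would prove the direction (i)$\Rightarrow$(ii). Suppose $\xi$ solves \eqref{NewSystem} with $\xi(0)=x$. I would treat the inhomogeneous term $g^i(t):=h^i(\xi(t))$ as a known continuous forcing function and invoke the variation-of-constants representation for linear inhomogeneous Caputo equations: for the scalar-block equation $^{C}D_{0+}^\alpha y^i = J_i y^i + g^i$ with $y^i(0)=x^i$, the unique solution is
\[
y^i(t) = E_\alpha(t^\alpha J_i)x^i + \int_0^t (t-\tau)^{\alpha-1}E_{\alpha,\alpha}((t-\tau)^\alpha J_i)\,g^i(\tau)\,d\tau.
\]
This is exactly $(\mathcal{T}_x\xi)^i(t)$, so $\xi=\mathcal{T}_x\xi$. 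The cleanest way to justify this representation rigorously is to apply the Riemann-Liouville integral operator $I_{0+}^\alpha$ to both sides of the equation and use the composition identity $I_{0+}^\alpha\, {}^{C}D_{0+}^\alpha y^i(t) = y^i(t) - y^i(0)$ (valid since $\alpha\in(0,1)$ and $y^i\in C^1$), turning the differential equation into the Volterra integral equation $y^i(t) = x^i + I_{0+}^\alpha(J_i y^i + g^i)(t)$, and then verifying that the Mittag-Leffler formula solves this integral equation.

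For the converse (ii)$\Rightarrow$(i), I would start from the assumption $\xi=\mathcal{T}_x\xi$, which gives the integral representation above for each block. Evaluating at $t=0$ yields $\xi(0)=E_\alpha(0)x=x$, so the initial condition holds. To recover the differential equation, I would apply $^{C}D_{0+}^\alpha$ to the integral formula (equivalently, show the formula satisfies the associated Volterra equation and then differentiate), using the fact that $t\mapsto E_\alpha(t^\alpha J_i)$ satisfies $^{C}D_{0+}^\alpha E_\alpha(t^\alpha J_i) = J_i E_\alpha(t^\alpha J_i)$ and that the convolution term, by the properties of $E_{\alpha,\alpha}$ as a fractional Green's kernel, contributes $J_i$ times the convolution plus the forcing $h^i(\xi(t))$. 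I expect the main obstacle to be the regularity bookkeeping: to legitimately write $^{C}D_{0+}^\alpha\xi$ one must ensure $\xi\in C^1$ (or at least that the Caputo derivative is well-defined), and the cleanest route avoids differentiating the singular convolution kernel directly. I would therefore phrase both directions through the equivalent Volterra integral equation $\xi^i(t)=x^i + I_{0+}^\alpha(J_i\xi^i + h^i\circ\xi)(t)$, proving that $\xi$ solves \eqref{NewSystem} if and only if it satisfies this integral equation (a standard equivalence for Caputo equations with continuous right-hand side), and then proving separately that the integral equation is solved exactly by the Mittag-Leffler formula defining $\mathcal{T}_x$. This reduces the whole proof to the single kernel computation verifying that the Mittag-Leffler representation satisfies the scalar-block Volterra equation, which follows from the series definition of $E_{\alpha,\beta}$ together with the identity $I_{0+}^\alpha t^{\beta-1} = \frac{\Gamma(\beta)}{\Gamma(\beta+\alpha)}t^{\beta+\alpha-1}$ applied termwise.
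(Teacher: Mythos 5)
Your proposal is correct and follows essentially the same route as the paper, which simply cites the variation-of-constants formula for Caputo equations (Kilbas et al.): your reduction to the equivalent Volterra integral equation $\xi^i(t)=x^i+I_{0+}^{\alpha}\bigl(\lambda_i\xi^i+h^i\circ\xi\bigr)(t)$, followed by the termwise kernel verification using $I_{0+}^{\alpha}t^{\beta-1}=\frac{\Gamma(\beta)}{\Gamma(\beta+\alpha)}t^{\beta+\alpha-1}$, is exactly the standard proof of that cited formula. The only point worth making explicit is that the direction (i)$\Rightarrow$(ii) also uses uniqueness of continuous solutions of the linear Volterra equation with fixed forcing $g^i=h^i\circ\xi$ (e.g., via the weakly singular Gronwall inequality), which you invoke with the phrase ``the unique solution'' but should state as a separate ingredient alongside the kernel computation.
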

\begin{proof}
The assertion follows from the variation of constants formula for fractional differential equations, see e.g., \cite{Kilbas}.
\end{proof}

Next, we provide some estimates on the operator $\mathcal{T}_{x}$. The main ingredient to obtain these estimates is the preparatory work in
Proposition \ref{Prp1}.
\begin{proposition}\label{Prp2} Consider system \eqref{NewSystem} and suppose that
\[
|\hbox{arg}(\lambda_i)|>\frac{\alpha \pi}{2},\qquad i=1,\dots,n.
\]
Then, there exists a constant $C(\alpha,\mathbf{\lambda})$ depending on $\alpha$ and
$\lambda:=(\lambda_1,\dots,\lambda_n)$ such that for all $x,\widehat x\in\R^{d}$ and $\xi,\widehat\xi\in C_\infty(\R_{\geq
0},\R^d)$ the following inequality holds
\begin{eqnarray*}\label{Contraction}
\notag \|\mathcal T_{x}\xi-\mathcal T_{\widehat x}\widehat\xi\|_\infty & \leq &
\max_{1\le i \le n}\sup_{t\geq 0}|E_\alpha(\lambda_i t^\alpha)| \|x-\widehat x\|\\
&+&C(\alpha,\lambda)\; \ell_h(\max(\|\xi\|_\infty,\|\widehat\xi\|_\infty))\|\xi-\widehat\xi\|_\infty.
\end{eqnarray*}
Consequently, $\mathcal T_{x}$ is well-defined and
\begin{equation}\label{well_defined}
\|\mathcal T_{x}\xi-\mathcal T_{x}\widehat\xi\|_\infty \leq C(\alpha,\lambda)\;
\ell_h(\max(\|\xi\|_\infty,\|\widehat\xi\|_\infty))\|\xi-\widehat\xi\|_\infty.
\end{equation}
\end{proposition}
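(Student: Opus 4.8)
The plan is to exploit the special block structure of the transformed system~\eqref{NewSystem}. Since each $J_i=\lambda_i\,\id_{d_i\times d_i}$ is a scalar multiple of the identity, the matrix Mittag-Leffler functions collapse to scalars, namely
$$E_\alpha(t^\alpha J_i)=E_\alpha(\lambda_i t^\alpha)\,\id_{d_i\times d_i},\qquad E_{\alpha,\alpha}((t-\tau)^\alpha J_i)=E_{\alpha,\alpha}(\lambda_i(t-\tau)^\alpha)\,\id_{d_i\times d_i}.$$
This reduces every estimate to the scalar Mittag-Leffler functions treated in Proposition~\ref{Prp1}. With this simplification I would write the $i$-th block of the difference as
$$(\mathcal{T}_x\xi)^i(t)-(\mathcal{T}_{\widehat x}\widehat\xi)^i(t)=E_\alpha(\lambda_i t^\alpha)(x^i-\widehat x^i)+\int_0^t(t-\tau)^{\alpha-1}E_{\alpha,\alpha}(\lambda_i(t-\tau)^\alpha)\bigl(h^i(\xi(\tau))-h^i(\widehat\xi(\tau))\bigr)\,d\tau,$$
so that the problem splits cleanly into a homogeneous term and a convolution term.

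For the homogeneous term I would simply estimate $|E_\alpha(\lambda_i t^\alpha)\,(x^i-\widehat x^i)|\le\sup_{t\ge0}|E_\alpha(\lambda_i t^\alpha)|\,\|x-\widehat x\|$, using $\|x^i-\widehat x^i\|\le\|x-\widehat x\|$ for the max norm. For the convolution term I would use the local Lipschitz property of $h$: since $\xi(\tau),\widehat\xi(\tau)$ lie in the ball of radius $r:=\max(\|\xi\|_\infty,\|\widehat\xi\|_\infty)$, one has $\|h^i(\xi(\tau))-h^i(\widehat\xi(\tau))\|\le\ell_h(r)\,\|\xi-\widehat\xi\|_\infty$, and pulling this constant out of the integral leaves $\int_0^t|(t-\tau)^{\alpha-1}E_{\alpha,\alpha}(\lambda_i(t-\tau)^\alpha)|\,d\tau$, which Proposition~\ref{Prp1}(ii) bounds by some $C(\alpha,\lambda_i)$ uniformly in $t$. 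Taking $C(\alpha,\lambda):=\max_{1\le i\le n}C(\alpha,\lambda_i)$ and then the supremum over $t\ge0$ and the maximum over $i$ yields the claimed master inequality.

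The two consequences then follow immediately. Setting $\widehat x=x$ kills the homogeneous term and gives the contraction estimate~\eqref{well_defined}. For well-definedness I would invoke Remark~\ref{Remark1}, which gives $h(0)=0$, hence $\mathcal{T}_0 0=0$; applying the master inequality with $\widehat x=0$ and $\widehat\xi=0$ bounds $\|\mathcal{T}_x\xi\|_\infty$ by a finite quantity, so $\mathcal{T}_x\xi$ is indeed bounded, and continuity in $t$ follows from continuity of the Mittag-Leffler functions and of the convolution integral.

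The only point requiring genuine care---and the one I would single out as the main obstacle---is the finiteness of $\sup_{t\ge0}|E_\alpha(\lambda_i t^\alpha)|$ appearing in the first term: this is where the sector hypothesis $|\arg\lambda_i|>\frac{\alpha\pi}{2}$ is essential. Since $E_\alpha(\lambda_i t^\alpha)$ is continuous with $E_\alpha(0)=1$ and, by the sector condition, decays to $0$ as $t\to\infty$ (the same decay that underlies Proposition~\ref{Prp1}(i) and the linear stability theorem), this supremum is finite. Everything else is a routine combination of the triangle inequality with the uniform integral bound from Proposition~\ref{Prp1}, so the heart of the argument is really the reduction to the scalar estimates already established.
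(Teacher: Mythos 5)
Your proposal is correct and follows essentially the same route as the paper's proof: the blockwise split into the homogeneous term $E_\alpha(\lambda_i t^\alpha)(x^i-\widehat x^i)$ and the convolution term, the Lipschitz bound with constant $\ell_h(\max(\|\xi\|_\infty,\|\widehat\xi\|_\infty))$, Proposition~\ref{Prp1}(ii) for the uniform integral bound, $C(\alpha,\lambda):=\max_{1\le i\le n}C(\alpha,\lambda_i)$, and well-definedness via $\mathcal{T}_0(0)=0$ together with the master inequality. Your explicit verification that $\sup_{t\ge 0}|E_\alpha(\lambda_i t^\alpha)|$ is finite under the sector condition is a detail the paper leaves implicit, but it is the same argument in substance.
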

\begin{proof}
For $i=1,\dots,n$, we get
\begin{align*}
\|(\mathcal{T}_x \xi)^i(t)-(\mathcal{T}_{\widehat{x}} \widehat{\xi})^i(t)\| \le &
\;\|x-\widehat x\||E_\alpha (\lambda_i t^\alpha)| + \\
&\hspace{-30mm} \ell_h(\max\{\|\xi\|_\infty,\|\widehat{\xi}\|_\infty\}) \|\xi-\widehat{\xi}\|_\infty \int_0^t|(t-\tau)^{\alpha-1}
E_{\alpha,\alpha}(\lambda_i (t-\tau)^\alpha)|\;d\tau.
\end{align*}
According to Proposition \ref{Prp1}(ii), we have
\begin{align*}
\|(\mathcal T_x \xi)^i-(\mathcal T_{\widehat x} \widehat \xi)^i\|_\infty  \le & \;\|x-\widehat x\|\sup_{t\ge 0}|E_\alpha (\lambda_i t^\alpha)|\\
& \hspace{-15mm}+ \ell_h(\max\{\|\xi\|_\infty,\|\widehat{\xi}\|_\infty\})C(\alpha,\lambda_i)\|\xi-\widehat{\xi}\|_\infty.
\end{align*}
Letting $C(\alpha,\mathbf{\lambda})=\max\{C(\alpha,\lambda_1),\dots,C(\alpha,\lambda_n)\}$, we obtain the estimate
\begin{align*}
\|\mathcal T_{x}\xi-\mathcal T_{\widehat x}\widehat\xi\|_\infty \leq &\;
\max_{1\le i \le n}\sup_{t\geq 0}|E_\alpha(\lambda_i t^\alpha)| \|x-\widehat x\|\\
& \hspace{-10mm}+C(\alpha,\lambda)\; \ell_h(\max(\|\xi\|_\infty,\|\widehat\xi\|_\infty))\|\xi-\widehat\xi\|_\infty,
\end{align*}
which leads to
\[
\|\mathcal T_{x}\xi-\mathcal T_{x}\widehat\xi\|_\infty \leq C(\alpha,\lambda)\;
\ell_h(\max(\|\xi\|_\infty,\|\widehat\xi\|_\infty))\|\xi-\widehat\xi\|_\infty.
\]
Note that from the definition of the Lyapunov-Perron operator $ \mathcal T_x $, $ \mathcal T_0(0)=0$. The proof is complete.
\end{proof}
So far, we have proved that the Lyapunov-Perron operator is well-defined and Lipschitz continuous. Note that the Lipschitz constant
$C(\alpha,\lambda)$ is independent of the constant $\delta$ which is hidden in the coefficients of system \eqref{NewSystem}. From now on, we
choose and fix the constant $\delta$ as follows $\delta:=\frac{1}{2C(\alpha,\lambda)}$. The remaining difficult question is now to choose a ball
with small radius in $C_\infty(\R_{\geq 0},\R^d)$ such that the restriction of the Lyapunov-Perron operator to this ball is
strictly contractive. A positive answer to this question is given in the following technical lemma.
\begin{lemma}\label{lemma6}
The following statements hold:
\vspace{-5mm}
\begin{itemize}
\item [(i)] There exists $r>0$ such that
\begin{equation}\label{Eq7a}
q:=C(\alpha,\lambda)\;  \ell_h(r) < 1.
\end{equation}
\item [(ii)] Choose and fix $r>0$ satisfying \eqref{Eq7a}. Define
\begin{equation}\label{Eq7b}
r^*:=\frac{r(1-q)}{\max_{1\le i\le n}\sup_{t\ge 0}|E_\alpha(\lambda_it^\alpha)|}.
\end{equation}
Let  $B_{C_{\infty}}(0,r):=\{\xi\in C_\infty(\R_{\geq 0},\R^d):\left||\xi|\right|_\infty\le r\}$. Then, for any $x\in
B_{\R^{d}}(0,r^*)$ we have $\mathcal T_{x} (B_{C_{\infty}}(0,r))\subset B_{C_{\infty}}(0,r)$ and
\begin{equation*}\label{LipschitzContinuity}
\|\mathcal T_{x}\xi-\mathcal T_{x}\widehat {\xi}\|_\infty \leq q\|\xi-\widehat{\xi}\|_\infty\quad\hbox{ for all } \xi,\widehat{\xi}\in
B_{C_{\infty}}(0,r).
\end{equation*}
\end{itemize}

\end{lemma}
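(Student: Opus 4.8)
The plan is to derive both parts from the single master estimate in Proposition~\ref{Prp2}, combined with the monotonicity of the map $r\mapsto\ell_h(r)$ and the specific choice $\delta=\frac{1}{2C(\alpha,\lambda)}$ that was fixed just before the statement. For part~(i), I would first observe that $\ell_h$ is nondecreasing in $r$, since taking the supremum in the definition of the Lipschitz constant over a larger ball can only increase it; hence $\lim_{r\to0}\ell_h(r)$ exists and, by Remark~\ref{Remark1}, equals either $0$ or $\delta$. In either case, multiplying by $C(\alpha,\lambda)$ and using $\delta=\frac{1}{2C(\alpha,\lambda)}$ gives $\lim_{r\to0}C(\alpha,\lambda)\ell_h(r)\le\frac12<1$. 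Therefore for all sufficiently small $r>0$ the quantity $q=C(\alpha,\lambda)\ell_h(r)$ satisfies \eqref{Eq7a}, which proves~(i).

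For part~(ii), the contraction estimate is essentially immediate: for $\xi,\widehat\xi\in B_{C_\infty}(0,r)$ we have $\max(\|\xi\|_\infty,\|\widehat\xi\|_\infty)\le r$, so the monotonicity of $\ell_h$ turns \eqref{well_defined} into $\|\mathcal T_x\xi-\mathcal T_x\widehat\xi\|_\infty\le C(\alpha,\lambda)\,\ell_h(r)\,\|\xi-\widehat\xi\|_\infty=q\,\|\xi-\widehat\xi\|_\infty$, which is exactly the asserted bound.

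The self-mapping property $\mathcal T_x(B_{C_\infty}(0,r))\subset B_{C_\infty}(0,r)$ is the point that requires the tailored radius $r^*$. I would apply the first (two-argument) inequality of Proposition~\ref{Prp2} with $\widehat x=0$ and $\widehat\xi\equiv0$, using $\mathcal T_0(0)=0$ noted at the end of that proof, to obtain
\[
\|\mathcal T_x\xi\|_\infty\le\max_{1\le i\le n}\sup_{t\ge0}|E_\alpha(\lambda_it^\alpha)|\,\|x\|+C(\alpha,\lambda)\,\ell_h(\|\xi\|_\infty)\,\|\xi\|_\infty.
\]
For $x\in B_{\R^d}(0,r^*)$ and $\xi\in B_{C_\infty}(0,r)$, bounding $\|x\|\le r^*$, $\|\xi\|_\infty\le r$, and $\ell_h(\|\xi\|_\infty)\le\ell_h(r)$, and then substituting the definition \eqref{Eq7b} of $r^*$, collapses the right-hand side to $r(1-q)+qr=r$. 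This yields $\mathcal T_x\xi\in B_{C_\infty}(0,r)$ and completes~(ii).

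The only genuinely nontrivial design choice is the value of $r^*$: it is reverse-engineered so that the linear contribution $\max_i\sup_t|E_\alpha(\lambda_it^\alpha)|\,r^*$ exactly fills the gap $r(1-q)$ that remains in the self-map budget after the nonlinear term has consumed $qr$. I expect no real obstacle beyond this bookkeeping, but I would take care to record that $\max_i\sup_t|E_\alpha(\lambda_it^\alpha)|$ is finite and positive — finiteness following from $|\arg\lambda_i|>\frac{\alpha\pi}{2}$, which forces boundedness of the Mittag-Leffler function on the relevant sector, and positivity from $E_\alpha(0)=1$ — so that $r^*$ in \eqref{Eq7b} is a well-defined positive number.
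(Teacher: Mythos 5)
Your proposal is correct and matches the paper's own proof essentially step for step: part (i) from Remark~\ref{Remark1} together with the choice $\delta=\frac{1}{2C(\alpha,\lambda)}$, the contraction bound from \eqref{well_defined} via monotonicity of $\ell_h$, and the self-mapping property by specializing the two-argument estimate of Proposition~\ref{Prp2} to $\widehat x=0$, $\widehat\xi\equiv 0$ (using $\mathcal T_0(0)=0$) and substituting \eqref{Eq7b} to get $(1-q)r+qr=r$. Your added remark on the finiteness and positivity of $\max_{1\le i\le n}\sup_{t\ge 0}|E_\alpha(\lambda_i t^\alpha)|$ is a detail the paper leaves implicit, but it changes nothing in the argument.
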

\begin{proof}
(i) By Remark \ref{Remark1}, $\lim_{r\to 0}\ell_h(r)\leq \delta$. Since $\delta C(\alpha,\lambda)=\frac{1}{2}$, the assertion (i) is proved.

(ii) Let $x\in \R^{d}$ be arbitrary with $\|x\|\leq r^*$. Let $\xi\in B_{C_{\infty}}(0,r)$ be arbitrary. According to \eqref{Contraction} in
Proposition \ref{Prp2}, we obtain that
\begin{eqnarray*}
\|\mathcal T_{x}\xi\|_\infty
& \leq & \;\max_{1\le i \le n}\sup_{t\ge 0}|E_\alpha(\lambda_it^\alpha)|\|x\|+ C(\alpha,\lambda)\,\ell_{h}(r)\|\xi\|_{\infty}\\
& \leq& \;(1-q)r+qr,
\end{eqnarray*}
which proves that $\mathcal T_{x}(B_{C_{\infty}}(0,r))\subset B_{C_{\infty}}(0,r)$. Furthermore, by  Proposition \ref{Prp1} and part (i) for all
$x\in B_{\R^d}(0,r^*)$ and $\xi,\widehat{\xi}\in B_{C_\infty}(0,r)$ we have
\begin{eqnarray*}
\|\mathcal T_{x}\xi-\mathcal T_{x}\widehat{\xi}\|_\infty &\leq&
C(\alpha,\lambda)\ell_{h}(r)\;\|\xi-\widehat{\xi}\|_\infty\\[1.5ex]
&\leq & q\|\xi-\widehat{\xi}\|_\infty,
\end{eqnarray*}
which concludes the proof.
\end{proof}
\begin{proof}[Proof of Theorem \ref{main result}]
Due to Remark \ref{Remark2}, it is sufficient to prove the asymptotic stability for the trivial solution of system \eqref{NewSystem}. For this
purpose, let $r^*$ be defined as in \eqref{Eq7b}. Let $x\in B_{\R^d}(0,r^*)$ be arbitrary. Using Lemma \ref{lemma6} and the Contraction Mapping
Principle, there exists a unique fixed point $\xi \in B_{C_\infty}(0,r)$ of $\mathcal{T}_x$. This point is also a solution of
\eqref{NewSystem} with the initial condition $\xi(0)=x$. Since the  initial value problem for Equation
\eqref{NewSystem} has unique solution,  this shows that the trivial solution $0$ is stable. To complete the proof of the theorem, we have to show that the trivial solution $0$ is
attractive. Suppose that $\xi(t)=((\xi)^1(t),\dots,(\xi)^n(t))$ is the solution of \eqref{NewSystem} which satisfies $\xi(0)=x$ for
an arbitrary $x=(x^1,\dots,x^n)\in B_{\R^d}(0,r^*)$. From Lemma \ref{lemma6}, we see that $\|\xi\|_\infty\le r$. Put
$a:=\limsup_{t\to\infty}\|\xi(t)\|$, then $a\in [0,r]$. Let $\varepsilon$ be an arbitrary positive number. Then, there exists $T(\varepsilon)>0$
such that
\[
\|\xi(t)\|\le (a+\varepsilon)\qquad \textup{for any}\;t\ge T(\varepsilon).
\]
For each $i=1,\dots,n$, we will estimate $\limsup_{t\to\infty}\|(\xi)^i(t)\|$. According to Proposition \ref{Prp1}(i), we obtain
\begin{eqnarray*}
&&\limsup_{t\to\infty}\left|\int_0^{T(\varepsilon)}(t-\tau)^{\alpha-1}E_{\alpha,\alpha}(\lambda_i(t-\tau)^\alpha)h^i(\xi(\tau))d\tau\right|\\[1.5ex]
&\le& \max_{t\in [0,T(\varepsilon)]}\|h^i(\xi(t))\|\limsup_{t\to \infty}\int_0^{T(\varepsilon)}\frac{M(\alpha,\lambda_i)}{(t-\tau)^{\alpha+1}}d\tau\\
&= &0.
\end{eqnarray*}
Therefore, from the fact that $(\xi)^i(t)=(\mathcal{T}_x \xi)^i(t)$ and $\lim_{t\to \infty}E_\alpha(\lambda_it^\alpha)=0$ we have
\begin{eqnarray*}
\limsup_{t\to\infty}\|(\xi)^i(t)\| &=&
\limsup_{t\to\infty}\left|\int_{T(\varepsilon)}^t(t-\tau)^{\alpha-1}E_{\alpha,\alpha}(\lambda_i(t-\tau)^\alpha)h^i(\xi(\tau))d\tau\right|\\
&\le& \ell_h(r)C(\alpha,\lambda_i)(a+\varepsilon),
\end{eqnarray*}
where we use the estimate
\begin{eqnarray*}
\left|\int_{T(\varepsilon)}^t(t-\tau)^{\alpha-1}E_{\alpha,\alpha}(\lambda_i(t-\tau)^\alpha)d\tau\right| &=&
\left|\int_0^{t-T(\varepsilon)}u^{\alpha-1}E_{\alpha,\alpha}(\lambda_iu^\alpha)du\right|\\
&\le& C(\alpha,\lambda_i),
\end{eqnarray*}
see Proposition \ref{Prp1}(ii), to obtain the inequality above. Thus,
\begin{align*}
a &\le \max\left\{\limsup_{t\to\infty}\|(\xi)^1(t)\|,\dots,\limsup_{t\to\infty}\|(\xi)^n(t)\|\right\}\\
& \le  \ell_h(r)C(\alpha,\lambda)(a+\varepsilon).
\end{align*}
Letting $\varepsilon\to 0$, we have
\[
a \le \ell_h(r)C(\alpha,\lambda)a.
\]
Due to the assumption $\ell_h(r)C(\alpha,\lambda)<1$, we get that $a=0$ and the proof is complete.
\end{proof}
\begin{remark}[Discussion about some related papers]\label{discussion}
As mentioned at the beginning of this paper there are some papers dealing with the problem of linearized stability of fractional differential equations  \cite{Chen,QianLiAgarwalWong2010,Wen08,Zhang15} where the authors formulated a theorem on linearized stability under various assumptions. Here we show that these papers \cite{Chen,QianLiAgarwalWong2010,Wen08,Zhang15} contain serious flaws in the proofs of the linearized stability theorem which make the proofs incorrect. Namely, there are two common flaws in those papers:
\begin{itemize}
\item {\em Incorrect application of the Gronwall lemma:} The authors apply the Gronwall lemma to get an estimate of a solution of the fractional differential equation under consideration (see \cite[l.~1, p.~604]{Chen}, \cite[l.~-8, p.~869]{QianLiAgarwalWong2010}, \cite[l.~6, column~2, p.~1180]{Wen08} and \cite[l.~-6, column~2, p.~103]{Zhang15}), but the multiplier function in the inequality they want to apply the Gronwall lemma to {\em does depend on the variable $t$} besides the variable $\tau$ of the integration. This circumstance makes their application of the Gronwall lemma invalid.
\item  {\em Invalid assumption of smallness of the solution:} The authors of \cite{Chen,Wen08,Zhang15} need the assumption of smallness of the solution $x(t)$ of the nonlinear system for all $t$ (see \cite[formulas (13) and (14), p.~603]{Chen},  \cite[formulas (23) and (26), p.~1180]{Wen08} and \cite[l.~-9, column~2, p.~103]{Zhang15}). Note that the smallness of $x(t)$ for all $t$ is a claim that must be proved in this case and the authors did not prove it at all. Moreover, this claim, in some sense, is almost equivalent to the conclusion about stability of the nonlinear system they wanted to prove.
\end{itemize}
For the paper \cite{QianLiAgarwalWong2010} (dealing with the Riemann--Liouville fractional derivative), since they first treated the case of  linear perturbation \cite[Theorem~4.1]{QianLiAgarwalWong2010}, they did not encounter the second flaw above, but with the first flaw they did arrive at wrong assertions in their theorems both in the linear case \cite[Theorem~4.1(a,b)]{QianLiAgarwalWong2010} as well as the nonlinear case \cite[Theorem~4.2(a,b)]{QianLiAgarwalWong2010}. An easy counterexample for the linear case  \cite[Theorem 4.1(a,b)]{QianLiAgarwalWong2010}  is $B=I-A$ with $I$ being the identity matrix.
\end{remark}

\section{Applications}
In this section, we revisit the problem of stablization by linear feedback of the following fractional Lotka-Volterra system:
\begin{align}\label{Lotka}
\begin{cases}
^{C}D^{\alpha}_{0+}x_1(t)&=x_1(t)(h+ax_1(t)+bx_2(t)),\\[0,5ex]
^{C}D^{\alpha}_{0+}x_2(t)&=x_2(t)(-r+cx_1(t)),\\[0.5ex]
\end{cases}
\end{align}
where the parameters $h,r$ are positive, see e.g.,\ \cite{Ahmed,Wen08}. This system can be rewritten as follows
\begin{equation}\label{FLS}
^{C}D^{\alpha}_{0+}x(t)=Ax(t)+f(x(t)),
\end{equation}
where
\begin{align*}
A=\begin{bmatrix}
h&0\\
0&-r
\end{bmatrix},
&& f(x)=\begin{bmatrix} ax_1^2+bx_1x_2\\cx_1x_2
\end{bmatrix}.
\end{align*}
In the following lemma, we first prove instability of the trivial solution for system \eqref{Lotka}. Finally, we show that by using a suitable
state-feedback controller, the controlled system becomes stable.
\begin{lemma}
The following statements hold:
\begin{itemize}
\item [(i)] The trivial solution of \eqref{Lotka} is unstable.
\item [(ii)] Letting  $B=(1,1)^{\rT}$ and $K=(-2h,0)$. Then, the trivial solution of the following closed-loop system
\begin{align}
^{C}D^\alpha_{0+}x(t)&=Ax(t)+f(x(t))+Bu(t),\notag\\[0.5ex]
u(t) &=K x(t),\notag
\end{align}
is stable.
\end{itemize}
\end{lemma}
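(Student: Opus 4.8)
The two parts rest on different mechanisms, so the plan is to treat them separately. For part~(i) I would exploit the unstable eigenvalue $h>0$ of $A$ (note $|\arg h|=0<\tfrac{\alpha\pi}{2}$, i.e.\ $h$ lies outside the stability sector). First I would observe that the line $\{x_2=0\}$ is invariant: if $x_2(0)=0$ then $x_2\equiv 0$ solves the second equation of \eqref{Lotka}, since both sides vanish, and by uniqueness of solutions this is \emph{the} solution. On this line the system reduces to the scalar Caputo equation $^{C}D^{\alpha}_{0+}x_1=x_1(h+ax_1)$. I would then fix a threshold $\rho>0$ small enough that $h+as\ge \tfrac h2$ for all $|s|\le\rho$ (possible by continuity, the value at $s=0$ being $h>0$) and argue by contradiction: were the origin stable, then for $\varepsilon=\rho$ there would exist $\delta>0$ such that every initial value $x_0=(\eta,0)$ with $0<\eta<\min(\delta,\rho)$ produces a globally defined solution with $|x_1(t)|\le\rho$ for all $t\ge0$.

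The core of the argument, and the step I expect to be the main obstacle, is to show that such a bounded trajectory must in fact escape to infinity. Writing the scalar equation as $^{C}D^{\alpha}_{0+}x_1=\tfrac h2 x_1+g$ with $g(t)=x_1(t)\bigl(\tfrac h2+ax_1(t)\bigr)$, the scalar variation-of-constants formula (the one-dimensional case of Theorem~\ref{Var_Const_Form}) gives
\[
x_1(t)=E_\alpha\bigl(\tfrac h2 t^\alpha\bigr)\eta+\int_0^t(t-\tau)^{\alpha-1}E_{\alpha,\alpha}\bigl(\tfrac h2(t-\tau)^\alpha\bigr)g(\tau)\,d\tau.
\]
Since $\tfrac h2>0$, the kernel $E_{\alpha,\alpha}(\tfrac h2 s^\alpha)$ is strictly positive (all its Taylor coefficients are), and on $|x_1|\le\rho$ one has $\tfrac h2+ax_1\ge0$, so $g$ carries the sign of $x_1$. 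A bootstrap on the first zero $\tau^*:=\inf\{t\ge0:x_1(t)\le0\}$ then shows $x_1>0$, hence $g\ge0$, on all of $[0,\infty)$; substituting back yields $x_1(t)\ge E_\alpha(\tfrac h2 t^\alpha)\eta\to\infty$, contradicting $|x_1(t)|\le\rho$. This forces instability. The reason this requires care is that the paper furnishes only a \emph{sufficient} criterion for stability, so instability has to be proved from scratch; moreover the sign of the coefficient $a$ is not prescribed, which rules out a one-line comparison and makes the positivity/bootstrap step necessary.

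For part~(ii) the plan is a direct appeal to Theorem~\ref{main result}. With the feedback $u=Kx=-2hx_1$ and $B=(1,1)^{\rT}$, equation \eqref{FLS} becomes $^{C}D^{\alpha}_{0+}x=(A+BK)x+f(x)$, where
\[
A+BK=\begin{bmatrix}-h&0\\-2h&-r\end{bmatrix}.
\]
This matrix is lower triangular with eigenvalues $-h$ and $-r$, both negative real, so $|\arg(-h)|=|\arg(-r)|=\pi>\tfrac{\alpha\pi}{2}$ for every $\alpha\in(0,1)$. The nonlinearity $f$ is quadratic, hence continuously differentiable with $f(0)=0$ and $Df(0)=0$, so its local Lipschitz constant satisfies \eqref{Lipschitz condition}. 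Theorem~\ref{main result}, applied to the matrix $A+BK$ and the map $f$, then yields asymptotic stability of the trivial solution of the closed-loop system, which in particular gives the asserted stability. All the work of the lemma is thus concentrated in the instability claim~(i).
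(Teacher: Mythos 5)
Your proposal is correct, and its overall architecture coincides with the paper's: part~(ii) is in both cases an immediate application of Theorem~\ref{main result} to $A+BK=\begin{pmatrix}-h&0\\-2h&-r\end{pmatrix}$ with eigenvalues $-h,-r$, and part~(i) proceeds in both cases by contradiction along the invariant line $\{x_2=0\}$, culminating in the lower bound $x_1(t)\ge E_\alpha(\tfrac h2 t^\alpha)x_1(0)$ and the divergence of $E_\alpha(\tfrac h2 t^\alpha)$. The one genuine difference lies in how that lower bound is obtained. The paper writes the scalar inequality $^{C}D^{\alpha}_{0+}x_1(t)\ge\tfrac h2 x_1(t)$ on the maximal interval of nonnegativity of $x_1$ and invokes the fractional comparison principle of \cite[Lemma 6.1]{Li_Chen_Podlubny2010} as a black box, then extends by continuity. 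You instead prove the comparison from scratch: split the right-hand side as $\tfrac h2 x_1+g$, apply the variation-of-constants formula, observe that the kernel $s^{\alpha-1}E_{\alpha,\alpha}(\tfrac h2 s^\alpha)$ is strictly positive (nonnegative Taylor coefficients), and run a bootstrap on the first zero $\tau^*$ of $x_1$: on $[0,\tau^*]$ one has $g\ge0$, whence $x_1(\tau^*)\ge E_\alpha(\tfrac h2 {\tau^*}^\alpha)\eta>0$, contradicting $x_1(\tau^*)=0$, so $\tau^*=\infty$ and the bound holds globally. This argument is sound (the stability hypothesis keeps $|x_1|\le\rho$ globally, so $g$ carries the sign of $x_1$ throughout, and stability also guarantees global existence per Definition~\ref{DS}). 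What each approach buys: the paper's citation is shorter and cleanly isolates the comparison principle; your version is self-contained and exposes the mechanism behind that principle, namely positivity of the Mittag-Leffler kernel in the variation-of-constants representation, at the cost of a slightly longer argument. Your extra verification in (ii) that the quadratic $f$ satisfies \eqref{Lipschitz condition} is a detail the paper leaves implicit.
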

\begin{proof}
(i) Choose and fix an arbitrary positive number $\eps$ such that $ \varepsilon |a| < \frac{h}{2} $. Suppose to the contrary that the trivial
solution of \eqref{Lotka} is stable. Then, there exists $\delta\in (0,\varepsilon)$ such that for any solution $(x_1(t),x_2(t))^{\rT}$ of
\eqref{Lotka} with the initial value satisfying $|x_1(0)|+|x_2(0)|<\delta$, then $|x_1(t)|+|x_2(t)|<\varepsilon$ for every $t\geq 0$. We now consider
the solution $(x_1(t),x_2(t))^{\rT}$ of \eqref{Lotka} satisfying that $x_1(0)=\frac{\delta}{2}$ and $x_2(0)=0$. From \eqref{Lotka} and
$x_2(0)=0$, we have $x_2(t)=0$ for all $t\geq 0$. Let $[0,T_{\rm max}]$ denote the maximal interval on which the solution $x_1(t)$ is nonnegative.
Since $ \varepsilon |a|< \frac{h}{2} $, it follows that
\[
^{C}D^{\alpha}_{+0}x_1(t)  \ge \dfrac{h}{2}x_1(t)\qquad\hbox{for all } t\in [0,T_{\rm max}].
\]
By \cite[Lemma 6.1]{Li_Chen_Podlubny2010}, we have
\begin{align*}
x_1(t) \ge E_\alpha(\displaystyle\dfrac{h}{2}t^\alpha)x_1(0)\qquad\hbox{for all } t\in [0,T_{\rm max}].
\end{align*}
Using continuity of the map $t\mapsto x_1(t)$, we obtain that $T_{\rm max}=\infty$ and therefore $x_1(t)\geq E_\alpha(\dfrac{h}{2}t^\alpha)x_1(0)$ for all $t\geq
0$. This contradicts the fact that $ \lim_{t\to\infty}E_\alpha(\displaystyle\dfrac{h}{2}t^\alpha)= \infty $. The proof of this part is complete.

\noindent (ii) The linear part of the closed-loop system is
\[
A+BK=\begin{bmatrix}
-h&0\\
-2h&-r
\end{bmatrix},
\]
which implies that the eigenvalues of $A+BK$ are $-h$ and $-r$. According to Theorem \ref{main result}, the zero solution of the closed-loop
system is asymptotically stable for any order $\alpha\in (0,1)$.
\end{proof}
\section*{Acknowledgement}
This research of the first, the second and the fourth author is funded by the Vietnam National Foundation for Science and Technology Development (NAFOSTED) under Grant Number 101.03-2014.42.

\end{document}